\renewcommand{\epsilon}{\varepsilon}
\begin{document}
\title{Countably many asymptotic tensor ranks}

\author{Andreas Blatter}
\address{Mathematical Institute, University of Bern,
Alpeneggstrasse 22, 3012 Bern, Switzerland}
\email{andreas.blatter@unibe.ch}

\author{Jan Draisma}
\address{Mathematical Institute, University of Bern, Sidlerstrasse 5, 3012
Bern, Switzerland; and Department of Mathematics and Computer Science,
P.O. Box 513, 5600 MB, Eindhoven, the Netherlands}
\email{jan.draisma@unibe.ch}

\author{Filip Rupniewski}
\address{Mathematical Institute, University of Bern,
Alpeneggstrasse 22, 3012 Bern, Switzerland}
\email{filip.rupniewski@unibe.ch}

\thanks{AB and FR are supported by Swiss National Science Foundation
(SNSF) project grant 200021\_191981, and JD is partially supported by
that grant and partially supported by Vici grant 639.033.514 from the
Netherlands Organisation for Scientific Research (NWO). JD thanks the
Institute for Advanced Study for the excellent working conditions,
under which part of this project was carried out.}

\maketitle

\begin{abstract}
In connection with recent work on gaps in the asymptotic subranks of
complex tensors the question arose whether the number of nonnegative real
numbers that arise as the asymptotic subrank of some complex tensor is
countable. In this short note we settle this question in the
affirmative, for all tensor invariants that are algebraic in
the sense that they are invariant under field automorphisms
of the complex numbers. 
\end{abstract}

\section{Introduction and Result}

\subsection{Tensor invariants and their asymptotic
counterparts}

Let $d \in \ZZ_{\geq 0}$. An {\em invariant} of $d$-way tensors is
the data of a function
\[ f=f_{V_1,\ldots,V_d}: V_1 \otimes \cdots \otimes V_d \to \RR_{\geq 0} \]
for every choice of finite-dimensional complex vector spaces
$V_1,\ldots,V_d$, satisfying the condition that whenever $\phi_i:V_i
\to W_i$ are linear isomorphisms for $i=1,\ldots,d$, we have
\[ f_{V_1,\ldots,V_d}=f_{W_1,\ldots,W_d} \circ \phi_1
\otimes \cdots \otimes \phi_d. \]
We call $f$ {\em algebraic} if, for every choice of $n_1,\ldots,n_d \in
\ZZ_{\geq 0}$, any $T \in \CC^{n_1} \otimes \cdots \otimes \CC^{n_d}$ and
any field automorphism $\sigma$ of $\CC$, we have $f(\sigma(T))=f(T)$,
where $\sigma(T)$ is the tensor obtained by applying $\sigma$ to all
entries of $T$.

The {\em asymptotic counterpart} $\widetilde{f}$ of an invariant of $d$-way
tensors is another invariant of $d$-way tensors defined by
\[ \widetilde{f}(T):=\lim_{n \to \infty} \sqrt[n]{f(T^{\boxtimes n})}, \]
provided that this limit exists. Here $T^{\boxtimes n}$ is the $n$-th
{\em vertical tensor product} of $T \in V_1 \otimes \cdots \otimes
V_d$, obtained by taking the $n$-th tensor power of $T$ and regarding
this as a $d$-way tensor in 
\[ (V_1^{\otimes n}) \otimes \cdots \otimes (V_d^{\otimes n}). \] 
A sufficient condition for $\widetilde{f}$ to be defined is that
$f$ is sub-multiplicative, i.e., $f(T \boxtimes S) \leq f(T) \cdot
f(S)$ for all $S$ and $T$ (this follows from Fekete's Lemma
\cite{Fekete1923}).

\subsection{The result}

We will establish the following fundamental result, which, as we will
see in Section~\ref{sec:Ranks}, in particular applies to asymptotic
versions of all known versions of rank.

\begin{thm}
Let $f$ be an algebraic invariant of $d$-way tensors, and assume that
its asymptotic counterpart $\widetilde{f}$ exists. Then
\begin{enumerate}
\item also $\widetilde{f}$ is an algebraic invariant of
$d$-way tensors; and 
\item the set of values of $f$, as $V_1,\ldots,V_d$ run through all complex
vector spaces and $T$ runs through $V_1 \otimes \cdots \otimes V_d$,
is at most countably infinite.
\end{enumerate}
\end{thm}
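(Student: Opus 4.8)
The plan is to treat the two assertions separately. Part~(1) is a short formal manipulation of the defining limit, while the substance of the theorem lies in part~(2): an algebraic invariant is constant on the $\mathrm{Aut}(\CC)$-orbits in each space of tensors of a fixed format, and there are only countably many such orbits.

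For part~(1), I would first note that $\widetilde f$ is an invariant. If $\phi_i\colon V_i \to W_i$ are linear isomorphisms and $S = (\phi_1 \otimes \cdots \otimes \phi_d)(T)$, then a direct check gives $S^{\boxtimes n} = (\phi_1^{\otimes n} \otimes \cdots \otimes \phi_d^{\otimes n})(T^{\boxtimes n})$, so $f(S^{\boxtimes n}) = f(T^{\boxtimes n})$ for every $n$, and passing to $n$-th roots and limits yields $\widetilde f(S) = \widetilde f(T)$. For algebraicity, the point is that for any field automorphism $\sigma$ of $\CC$ one has $\sigma(T)^{\boxtimes n} = \sigma(T^{\boxtimes n})$ entrywise, because each entry of $T^{\boxtimes n}$ is a product of $n$ entries of $T$ and $\sigma$ is multiplicative; hence $f(\sigma(T)^{\boxtimes n}) = f(\sigma(T^{\boxtimes n})) = f(T^{\boxtimes n})$ since $f$ is algebraic. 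Thus the two sequences whose $n$-th roots define $\widetilde f(\sigma(T))$ and $\widetilde f(T)$ literally coincide, so the limits agree and $\widetilde f$ is algebraic.

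For part~(2), since every finite-dimensional complex vector space is isomorphic to a coordinate space and $f$ is an invariant, it suffices to bound the number of values taken by $f$ on $\CC^{n_1} \otimes \cdots \otimes \CC^{n_d} \cong \CC^N$, with $N = n_1 \cdots n_d$, and then take the union over the countably many formats $(n_1,\dots,n_d)$. Fix a format and identify a tensor with its tuple of entries $T = (t_1,\dots,t_N) \in \CC^N$. To $T$ I attach the ideal $\mathfrak p_T := \{g \in \QQ[x_1,\dots,x_N] : g(T) = 0\}$ of polynomial relations over $\QQ$ among its entries, which is prime since it is the kernel of an evaluation homomorphism into a field. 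Any automorphism $\sigma$ of $\CC$ fixes $\QQ$ pointwise, so $g(\sigma(T)) = \sigma(g(T))$ for $g \in \QQ[x_1,\dots,x_N]$, whence $\mathfrak p_{\sigma(T)} = \mathfrak p_T$; together with algebraicity of $f$ this shows $f$ is constant on every $\mathrm{Aut}(\CC)$-orbit, so the number of values of $f$ on $\CC^N$ is at most the number of orbits. The key step is the converse: if $\mathfrak p_T = \mathfrak p_{T'}$, then a polynomial over $\QQ$ vanishes on $T$ exactly when it vanishes on $T'$, so $t_i \mapsto t_i'$ extends to a well-defined field isomorphism $\QQ(t_1,\dots,t_N) \to \QQ(t_1',\dots,t_N')$ between two subfields of $\CC$, and by the isomorphism-extension theorem this extends further to an automorphism of $\CC$ carrying $T$ to $T'$. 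Here one uses that $\CC$ is algebraically closed of transcendence degree $2^{\aleph_0}$ over $\QQ$, hence of the same transcendence degree over any countable subfield, so that the transcendence bases one must identify have equal cardinality. Therefore $T \mapsto \mathfrak p_T$ is injective on orbits, and the number of values of $f$ on $\CC^N$ is at most the number of prime ideals of $\QQ[x_1,\dots,x_N]$ — which is countable, since that ring is Noetherian, so every ideal is generated by a finite subset of the countable set $\QQ[x_1,\dots,x_N]$, and there are only countably many such subsets. Taking the union over all formats proves~(2).

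The genuinely non-formal ingredient is the isomorphism-extension step in part~(2), together with the cardinality bookkeeping for transcendence degrees; I expect that to be the only place requiring care, everything else being routine.
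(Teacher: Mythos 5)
Your proposal is correct and follows essentially the same route as the paper: part (1) via the identity $\sigma(T)^{\boxtimes n}=\sigma(T^{\boxtimes n})$ combined with algebraicity of $f$, and part (2) by attaching to $T$ the ideal of $\QQ$-algebraic relations among its entries, using the isomorphism-extension theorem (with the transcendence-degree bookkeeping) to show $f$ depends only on that ideal, and counting ideals via Noetherianity. The minor additions (checking invariance of $\widetilde f$ under linear isomorphisms, noting primality of the ideal) are harmless and do not change the argument.
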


\begin{cor} \label{cor:Countable}
The set of asymptotic ranks, the set of asymptotic subranks,
and the set of asymptotic geometric ranks of complex
$d$-tensors is countable.
\end{cor}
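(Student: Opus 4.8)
\emph{The plan} is to establish (1) and (2) in turn; (1) is a formal consequence of the definitions, while (2) is the substance of the theorem and is where algebraicity of $f$ is genuinely used. For (1): that $\widetilde{f}$ is an invariant follows because, for linear isomorphisms $\phi_i\colon V_i\to W_i$ and $T\in V_1\otimes\cdots\otimes V_d$, one has, under the canonical identification of $(V_1\otimes\cdots\otimes V_d)^{\otimes n}$ with $V_1^{\otimes n}\otimes\cdots\otimes V_d^{\otimes n}$, the equality $\bigl((\phi_1\otimes\cdots\otimes\phi_d)(T)\bigr)^{\boxtimes n}=(\phi_1^{\otimes n}\otimes\cdots\otimes\phi_d^{\otimes n})(T^{\boxtimes n})$, with each $\phi_i^{\otimes n}$ again a linear isomorphism; invariance of $f$ then equates $f$ on the two $n$-th vertical powers, and taking $n$-th roots and passing to the limit (which exists by hypothesis) gives $\widetilde{f}\bigl((\phi_1\otimes\cdots\otimes\phi_d)(T)\bigr)=\widetilde{f}(T)$. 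That $\widetilde{f}$ is algebraic follows because applying a field automorphism $\sigma$ of $\CC$ entrywise commutes with forming vertical powers, i.e.\ $\sigma(T)^{\boxtimes n}=\sigma(T^{\boxtimes n})$, so $f\bigl(\sigma(T)^{\boxtimes n}\bigr)=f\bigl(\sigma(T^{\boxtimes n})\bigr)=f(T^{\boxtimes n})$ by algebraicity of $f$, and again taking roots and the limit gives $\widetilde{f}(\sigma(T))=\widetilde{f}(T)$.

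For (2): by choosing bases, every tensor in $V_1\otimes\cdots\otimes V_d$ is the image, under a tuple of linear isomorphisms, of a tensor in $\CC^{n_1}\otimes\cdots\otimes\CC^{n_d}$ with $n_i=\dim V_i$, so by invariance the set of values of $f$ is the union, over the countably many $\vec n=(n_1,\dots,n_d)\in\ZZ_{\geq 0}^{d}$, of the sets of values of $f$ on $\CC^{n_1}\otimes\cdots\otimes\CC^{n_d}$; it therefore suffices to show each of these sets is countable. Fix $\vec n$, put $N=n_1\cdots n_d$, identify $\CC^{n_1}\otimes\cdots\otimes\CC^{n_d}$ with $\CC^{N}$, and let $x_1,\dots,x_N$ be the coordinate functions. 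To a tensor $T$ with entries $a_1,\dots,a_N\in\CC$ attach the (prime) ideal
\[ P_T:=\ker\bigl(\QQ[x_1,\dots,x_N]\to\CC,\ x_i\mapsto a_i\bigr). \]
Every ideal of the countable ring $\QQ[x_1,\dots,x_N]$ is generated by a finite subset of it, and there are only countably many such finite subsets, so there are only countably many possibilities for $P_T$. Hence it suffices to prove that $f(T)$ depends only on $P_T$.

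So suppose $T,T'$ have entries $a_1,\dots,a_N$ and $a_1',\dots,a_N'$ with $P_T=P_{T'}=:P$. The two ring isomorphisms $\QQ[x_1,\dots,x_N]/P\to\QQ[a_1,\dots,a_N]$ and $\QQ[x_1,\dots,x_N]/P\to\QQ[a_1',\dots,a_N']$ (induced by $x_i\mapsto a_i$, resp.\ $x_i\mapsto a_i'$) pass to fraction fields and compose to a field isomorphism $\theta\colon K:=\QQ(a_1,\dots,a_N)\to K':=\QQ(a_1',\dots,a_N')$ with $\theta(a_i)=a_i'$ for all $i$. Since $K$ and $K'$ are finitely generated over $\QQ$ while $\CC$ has transcendence degree $2^{\aleph_0}$ over $\QQ$, it also has transcendence degree $2^{\aleph_0}$ over each of $K$ and $K'$; hence $\theta$ extends to an automorphism $\sigma$ of $\CC$ — first extend $\theta$ over the algebraic closures of $K$ and $K'$ inside $\CC$, then extend across transcendence bases of $\CC$ over these closures, using that the two bases have equal cardinality. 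As $\sigma(a_i)=a_i'$ for all $i$ we get $\sigma(T)=T'$, so $f(T)=f(\sigma(T))=f(T')$ by algebraicity of $f$. This proves that $f(T)$ depends only on $P_T$, and hence that $f$ takes at most countably many values.

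The only nonformal ingredient is the field-theoretic extension step — that an isomorphism between finitely generated subfields of $\CC$ extends to an automorphism of $\CC$, which rests on $\CC$ having uncountable transcendence degree over $\QQ$ — and I expect that, together with the routine bookkeeping about ideals of $\QQ[x_1,\dots,x_N]$, to be where attention is needed. Finally, Corollary~\ref{cor:Countable} follows by applying the theorem once more with $\widetilde{f}$ in place of $f$ (legitimate since $\widetilde{f}$ is an algebraic invariant by (1) and coincides with its own asymptotic counterpart), which shows $\widetilde{f}$ takes only countably many values, once asymptotic rank, subrank, and geometric rank have been recognized as such invariants in Section~\ref{sec:Ranks}.
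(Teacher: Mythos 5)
Your argument is correct and follows essentially the same route as the paper: the corollary is obtained exactly as in the text, by combining the algebraicity of rank, subrank, and geometric rank (Section~\ref{sec:Ranks}) with items (1) and (2) of the theorem applied to $\widetilde{f}$, and your re-derivation of the theorem itself (invariance under base change, reduction to the ideal $P_T$ of $\QQ$-algebraic relations among the entries, extension of the induced isomorphism of finitely generated subfields to an automorphism of $\CC$ via transcendence bases, and countability of finitely generated ideals of $\QQ[x_1,\dots,x_N]$) mirrors the paper's proof step for step. Your extra remark that $\widetilde{f}$ coincides with its own asymptotic counterpart, so that the theorem's hypothesis is satisfied when it is applied to $\widetilde{f}$, is a small point of care the paper glosses over, and it is correct.
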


\begin{proof}[Proof of the corollary]
As we will see in Section~\ref{sec:Ranks}, tensor rank, subrank, and
geometric rank are all algebraic, and hence, by the first item of the
theorem, so are their asymptotic counterparts $\widetilde{f}$. By the
second item applied to $\widetilde{f}$, it takes on countably many
values. 
\end{proof}

\subsection{Relations to recent literature}

In \cite{Christandl22}, it is proved that the first few values of the
asymptotic subrank of a $d$-tensor (regardless of the field) are $0,1,$
and $2^{h(1/d)}$, where $h:(0,1) \to \RR$ is the binary entropy function
defined by
\[ h(p):=-p\log_2(p) - (1 - p) \log_2 (1 - p), \]
and similar gaps for various other notions of asymptotic rank
are found. 

This led to the question, posed at the {\em Workshop on geometry
and complexity theory} in Toulouse, April 2022, of what the set of
values of various asymptotic rank functions looks like.

One of the motivation to study asymptotic rank are questions and conjectures related to the exponent of matrix multiplication tensor $M_{\langle n \rangle}$.
Asymptotic rank of an order 3 tensor $T$ may be interpreted as a generalization of the limit with respect to $n$ of the rank of matrix multiplication tensor $M_{\langle n \rangle} \in \CC^{n^2} \otimes \CC^{n^2} \otimes \CC^{n^2}$.
See Strassen’s Asymptotic Rank Conjecture, \cite{Strassen94} and
\cite[Questions 1.5, 1.6]{Conner21} which implicitly already
appeared in  \cite{Strassen88}, \cite{Strassen91}, \cite{Strassen94} and
later in \cite{Buergisser97}, \cite{Christandl23}.

Corollary
\ref{cor:Countable} says that, for tensors
over $\CC$, the set of values of various asymptotic rank functions  is at most countable. It does not, however,  explain the existence of gaps as
found in \cite{Christandl22}.

For tensors over a fixed finite field, it is evident that any tensor
invariant of $d$-tensors takes at most countably many values. In our
recent paper \cite{Draisma22c}, we prove the much deeper statement that
any restriction-monotone real-valued function on $d$-tensors {\em over a
fixed finite field} takes a {\em well-ordered} set of values. This does
explain the existence of gaps, though it does not exclude the possibility
that a set like $\{3-1/n \mid n \in \ZZ_{\geq 1}\}$ might be the set of
values of such a function.

Over $\CC$ (or any other field), restriction-monotone functions that,
in addition, are lower semi-continuous in the Zariski topology, also
take a well-ordered set of values, see \cite[Remark 1.3.4]{Draisma22c}.

\section{Algebraicity of tensor ranks} \label{sec:Ranks}

\subsection{Three well-known notions of rank}

Let $V_1,\ldots,V_d$ be finite-dimensional complex vector spaces. 

\begin{de}
The {\em rank} of $T \in V_1 \otimes \cdots \otimes V_d$ is the smallest
$r$ such that $T$ can be written as
\[ T=\sum_{i=1}^r v_{i,1} \otimes \cdots \otimes v_{i,d}. \]
Dually, the {\em subrank} of $T$ is the largest $r$ such
that there exist linear maps $\phi_i:V_i \to \CC^r$ with 
\[ (\phi_1 \otimes \cdots \otimes \phi_d)(T)=\sum_{i=1}^r
e_i \otimes \cdots \otimes e_i. \]
Finally, the {\em geometric rank} of $T$ \cite{Kopparty20} equals the
codimension in $V_1^* \times \cdots \times V_{d-1}^*$ of the
algebraic variety 
\[ X(T)=\{(x_1,\ldots,x_{d-1}) \in V_1^* \times \cdots
\times V_{d-1}^* \mid
T(x_1,\ldots,x_{d-1},\cdot)\equiv 0\}. \qedhere \]
\end{de}

\subsection{Algebraicity}

\begin{lm}
Rank, subrank, and geometric rank are algebraic tensor
invariants. 
\end{lm}

\begin{proof}
For tensor rank, we note that applying a field automorphism $\sigma$ to all
vectors in a decomposition of $T$ as a sum of $r$ tensors product of
vectors yields such a decomposition of $\sigma T$. For
subrank, we note that if 
\[ (A_1 \otimes \cdots \otimes A_d)T = \sum_{i=1}^r e_i^{\otimes
d}, \]
then 
\[ (\sigma A_1) \otimes \cdots \otimes (\sigma A_d))(\sigma
T)=\sum_{i=1}^r e_i^{\otimes d}. \]
For geometric rank, we first observe that the action of $\Aut(\CC)$
on $\CC$ and on each $\CC^{n_i}$ naturally yields an action on the dual
space $(\CC^{n_i})^*$ given by $(\sigma x)(v):=\sigma(x(\sigma^{-1}
v))$. With respect to this action we have 
\begin{align*} 
\sigma X(T)&=\{(\sigma x_1,\ldots,\sigma x_{d-1}) \mid
\forall y \in V_d^*:
T(x_1,\ldots,x_{d-1},y) =0\}\\
&=\{(x_1,\ldots,x_{d-1}) \mid
\forall y \in V_d^*:
T(\sigma^{-1} x_1,\ldots, \sigma^{-1} x_{d-1}, \sigma^{-1} y) = 0\}\\
&=\{(x_1,\ldots,x_{d-1}) \mid \forall y \in V_d^*:
\sigma T(\sigma^{-1} x_1,\ldots, \sigma^{-1} x_{d-1},
\sigma^{-1} y) = 0\}\\
&=X(\sigma T). 
\end{align*}
In the last step, we used that the natural action of $\Aut(\CC)$ on
tensors agrees with the action of $\Aut(\CC)$ on multilinear forms given
by $(\sigma,T) \mapsto \sigma \circ T \circ (\sigma^{-1})^d$.

Since field automorphisms are continuous in the Zariski
topology and dimension is a topological invariant, this
implies that the geometric rank of $T$ equals that of
$\sigma T$. 
\end{proof}

We leave it to the reader to verify that many other notions of tensor
rank, such as slice rank and partition rank, are also algebraic.

\section{Proof of the theorem}

\begin{proof}
We first show that if $f$ is an algebraic tensor invariant, then so
is $\widetilde{f}$. To this end, let $T \in \CC^{n_1} \otimes \cdots
\otimes \CC^{n_d}$ and let $\sigma$ be a field automorphism of $\CC$. Set
$r:=\tilde{f}(T)$ and let $\epsilon>0$. Then there exists an
$n_0$ such that for $n > n_0$ we have 
\[ \left|\sqrt[n]{f(T^{\boxtimes n})}-r\right|< \epsilon. \]
We then have 
\[ \left|\sqrt[n]{f((\sigma T)^{\boxtimes n})}-r\right|
= 
\left|\sqrt[n]{f(\sigma (T^{\boxtimes n}))}-r\right|
=
\left|\sqrt[n]{f(T^{\boxtimes n})}-r\right|
<\epsilon, \]
where the first step uses that the entries of $T^{\boxtimes n}$ are
polynomial functions, defined over $\ZZ$, of the entries of $T$, and
that these functions commute with the field automorphism $\sigma$;
and the last step uses that $f$ is algebraic.

To see that the number of values of an algebraic tensor invariant is
at most countable, we proceed as follows.  As the tensor invariant $f$
is invariant under linear isomorphisms, any $d$-way tensor has the
same $f$-value as some tensor in a tensor product $\CC^{n_1} \otimes
\cdots \otimes \CC^{n_d}$. Since the number of tuples $(n_1,\ldots,n_d)
\in \ZZ_{\geq 0}$ is countable, it suffices to show that for fixed
$n_1,\ldots,n_d$, $f$ takes at most a countable number of values on
$\CC^{n_1} \otimes \cdots \otimes \CC^{n_d}$.

To this end, we claim that the value of $f$ on a $d$-way tensor
$T=(t_{i_1,\ldots,i_d})_{i_1,\ldots,i_d}$ depends only on the ideal
\[ I(T):=\{f \in R:=\QQ[(x_{i_1,\ldots,i_d})_{i_1,\ldots,i_d}] \mid
f(T)=0\} \]
of algebraic relations over $\QQ$ among the entries of $T$.
Indeed, let $T'$ be another tensor with $I(T)=I(T')$, and let $L,L'$
be the subfields of $\CC$ generated by the entries of $T$ and of $T'$,
respectively. Since $L$ is isomorphic to the fraction field of the quotient
$R/I(T)$, and similarly for $L'$, there is a unique field isomorphism
$\sigma: L \to L'$ that maps each entry $t_{i_1,\ldots,i_d}$ of $T$ to the
corresponding entry $t'_{i_1,\ldots,i_d}$ of $T'$. 

Any field isomorphism between finitely generated subfields of $\CC$
extends to an automorphism of $\CC$. (Indeed, the transcendence degree of
$\CC$ over $L$ and over $L'$ are equal (infinite) cardinal
numbers (namely, the cardinality of $\RR$), so if
we choose transcendence bases $B,B'$ of $\CC$ over $L,L'$, respectively,
then there exists a bijection $B \to B'$. Extend $\sigma:L \to L'$
to the unique isomorphism $L(B) \to L'(B')$ that agrees with this
bijection on $B$.  And then apply \cite[Theorem 6]{Yale66} to see that
this isomorphism extends to an isomorphism of the algebraic closure $\CC$
of $L(B)$ into the algebraic closure of $L(B')$, which is also $\CC$.)

Denote by $\sigma$ an extension of $\sigma$ to an
automorphism of $\CC$. Then $\sigma T=T'$ by construction,
and therefore $f(T)=f(\sigma T)=f(T')$ since $f$ is
algebraic.

Finally, $I(T)$ is an ideal in a polynomial ring over $\QQ$ with
a finite number of variables, and hence, by Hilbert's basis theorem,
generated by a finite number of polynomials. Since the number
of polynomials in said polynomial ring is countable, so is
the number of finite subsets of that polynomial ring, and
so is the number of ideals in it. We conclude that $I(T)$ takes only
countably many values, hence, by the above, so does $f$.
\end{proof}

\bibliographystyle{alpha}
\bibliography{diffeq,draismapreprint}

\end{document}